 \newcommand{\eps}{\epsilon}
\def\begfig {
\begin{figure}
\small }
\def\endfig {
\normalsize
\end{figure}
}
    \newtheorem{theorem}    {Theorem}   
    \newtheorem{lemma}      [theorem]       {Lemma}
    \newtheorem{proposition}       [theorem]       {Proposition}
    \newtheorem*{claim}{Claim}
    \newtheorem*{theorem*}{Theorem}
    \theoremstyle{definition}
    \newtheorem{definition}  [theorem] {Definition}
    \theoremstyle{definition}
    \newtheorem{remark}   [theorem]       {Remark}
\title[Translators asymptotic to cylinders]{Translators asymptotic to cylinders}
\author{Or Hershkovits}
\thanks{The author was partially supported by an AMS-Simons Travel Grant}
\address{Department of Mathematics\\ Stanford University\\ Stanford, CA 94305}
\email{orher@stanford.edu}
\subjclass[2010]{Primary 53C44; Secondary 49Q20.}
\subjclass[2010]{Primary 53C44; Secondary 49Q20.}
\begin{document}
\maketitle
\begin{abstract}
We show that the Bowl soliton in $\mathbb{R}^3$ is the unique translating solutions of the mean curvature flow which has the family of shrinking cylinders as an asymptotic shrinker at $-\infty$. As an application, we show that for a generic mean curvature flow,  all (non-static) translating limit flows are the bowl soliton. The crucial point is that we do not make any global convexity assumption, while as the same time, the asymptotic requirement is very weak.  
 \end{abstract}
 
 \vspace{10 mm}
 
Translating solutions to the mean curvature flow (MCF) play a key role in the analysis of singularity formation of the flow. In particular, as far as the author knows, self-shrinking solutions and translating solutions are the only kind of limit flows that have been shown to arise when blowing up a singularity. 
In the mean convex setting, an important paper of Haslhofer \cite{Haslhofer_bowl} (which in turn, uses ideas from a very important paper of Brendle analyzing the analogues situation in Ricci flow \cite{Brendle_sym}) shows that the only translator that arises as a blow up of a compact mean-convex MCF in $\mathbb{R}^3$ is the bowl soliton - a graphical, rotationally symmetric translating solution which, up to logarithmic terms, is asymptotic to a paraboloid. More recently, a ground-breaking paper of Brendle and Choi \cite{BC}  showed that the bowl solution is the unique non-shrinking blow-up limit that arises in compact mean-convex MCF. Both results follow from a combination of the theory of mean convex flows \cite{white_size,white_nature,HK_mean_convex} and  characterization results: That the Bowl is the unique strictly convex, non-collapsed translator in $\mathbb{R}^3$ in \cite{Haslhofer_bowl} and that it's the unique strictly convex, ancient, non-collapsed non-compact solution to the MCF in $\mathbb{R}^3$ in \cite{BC}. The fact that information of ``being ancient'' is successfully used in the latter paper is what makes it so remarkable.  

There are other uniqueness result for the bowl solution. The work of Clutterback, Schnurer and Schulze \cite{CSS} implies that it is the unique rotationally symmetric translating disk. An import result of Wang \cite{Wang} implies that it's the unique entire convex translating graph. In a recent paper, Spruck and Xiao \cite{SX} have shown that the convexity assumption of \cite{Wang} can be weakened to mean convexity. 

Abandoning the global mean convex world, a beautiful paper of Matrin,Savas-Halilaj and Smoczyk \cite{MHS} uses the moving plane method to show that a translator which has an end which is strongly asymptotic to  the bowl solution is the bowl. More precisely, if a translator $M$ has an end in which it can be written as a graph of a function $g:\mathbb{R}^2-B(0,R)\rightarrow \mathbb{R}$ satisfying
\begin{equation}\label{CHM}
g(x,y)=\frac{1}{2}(x^2+y^2)-\log(\sqrt{x^{2}+y^2})+O\Big(\frac{1}{\sqrt{x^2+y^2}}\Big)
\end{equation} 
then $M$ is the bowl-soliton.

\vspace{5 mm}

In this paper, we classify all translators that can appear as blow-up flows to generic MCF i.e. to MCF encountering only multiplicity one cylinders and spheres as tangent flows. Most importantly, \textit{we do not impose any global (or even local) convexity assumption around the singularity}.
\begin{theorem}\label{main_thm_b}
Let $M_t\subseteq \mathbb{R}^3$ be a MCF encountering only cylindrical singularities. Then every (non-static) translating limit flow to $M_t$ is the bowl soliton.
\end{theorem}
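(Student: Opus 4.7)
The plan is to reduce Theorem \ref{main_thm_b} to the main characterization result announced in the abstract: a translator in $\mathbb R^3$ whose asymptotic shrinker at $-\infty$ is the family of shrinking cylinders must be the bowl soliton. Thus, given a non-static translating limit flow $\tilde M_t$ of $M_t$ arising at a cylindrical singular space-time point $X_0=(x_0,t_0)$, the whole task is to verify that the tangent flow at $-\infty$ of $\tilde M_t$ is a multiplicity one shrinking cylinder.

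To unpack this, recall that by definition of translating limit flow there exist space-time points $X_k \to X_0$ and scales $\lambda_k \to \infty$ such that the parabolic rescalings of $M_t$ centered at $X_k$ with factor $\lambda_k$ converge, as Brakke flows, to $\tilde M_t$, and $\tilde M_t$ is non-static. The cylindricality hypothesis says that the tangent flow of $M_t$ at $X_0$ is the multiplicity one shrinking cylinder $\CC_t$; equivalently, there exist scales $\mu_j \to \infty$ such that the parabolic rescalings of $M_t$ centered at $X_0$ with factor $\mu_j$ converge to $\CC_t$. Since $\tilde M_t$ is non-static (translating at unit speed after normalization), the scales $\lambda_k$ necessarily grow strictly slower than the natural parabolic scale $(t_0-t_k)^{-1/2}$ attached to $X_0$.

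The core step is then a diagonal Brakke flow argument. For each $R>1$, the rescaling of $\tilde M_t$ about the origin by $R^{-1}$ can be approximated, for $k$ large, by the rescaling of $M_t - X_k$ with factor $\lambda_k/R$; since $\lambda_k/R \to \infty$ while $X_k \to X_0$, these rescalings in turn approximate $\CC_t$, by Huisken's monotonicity formula and upper semicontinuity of the Gaussian density. Extracting a diagonal subsequence in $R$ and $k$ exhibits $\CC_t$ as a tangent flow at $-\infty$ of $\tilde M_t$; multiplicity one is preserved thanks to Brakke's local regularity theorem applied on the smooth unit-density cylinder. The main obstacle is coordinating the three families of scales $\lambda_k,\mu_j,R$ so that both convergences survive to the diagonal, and verifying that the resulting asymptotic shrinker does not depend on the subsequence chosen. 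Once this is done, the characterization theorem from the abstract directly implies that $\tilde M_t$ is the bowl soliton, completing the proof.
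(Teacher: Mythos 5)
Your overall reduction---verify that the asymptotic shrinker at $-\infty$ of the translating limit flow is a multiplicity one cylinder, then quote the characterization theorem---is indeed the paper's strategy, and you correctly flag multiplicity one and Brakke regularity as what is needed to upgrade to smooth convergence so that the characterization applies. But the core step fails as you describe it. The rescalings of $M_t$ centered at the moving points $X_k$ with factors $\lambda_k/R\to\infty$ are \emph{not} rescalings at the fixed point $X_0$, so the hypothesis that the tangent flow at $X_0$ is the cylinder gives you no right to conclude that they ``approximate $\CC_t$'': they converge (subsequentially) to limit flows of $M$ at $X_0$, which is exactly the class of objects the theorem is trying to classify. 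Indeed, if your diagonal argument were valid, running it with $R$ fixed (say $R=1$) would show that the translating limit flow itself is a shrinking cylinder, contradicting the assumption that it is a non-static translator. Huisken's monotonicity formula and upper semicontinuity of Gaussian density only transfer a \emph{numerical} bound---the density of any blow-up at these subordinate scales is at most the cylindrical value $\mathcal{E}(S^1)$---they do not transfer the geometry of the tangent flow to the asymptotic shrinker.

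The missing ingredient is a classification of low-entropy self-shrinkers. The paper argues as follows: by Huisken's monotonicity formula and Ilmanen's compactness there is a sequence $s_j\to-\infty$ with $N_{s_j}/\sqrt{-s_j}$ converging as varifolds to a smooth, non-compact self-shrinker $\mathcal{S}$ (a priori with multiplicity) satisfying $\mathcal{E}(\mathcal{S})=\mathcal{E}(N)$; comparing Gaussian density ratios of the original flow at points $(x_k',t_k')\to(x_0,t_0)$ with the ratios near the cylindrical singularity gives $\mathcal{E}(\mathcal{S})\le\mathcal{E}(S^1)$; and then \cite[Corollary 1.2]{BW} is invoked to conclude that a shrinker with entropy at most $\mathcal{E}(S^1)$ is a plane, sphere, or cylinder, so non-compactness and $\mathcal{E}(N)>1$ force $\mathcal{S}$ to be a multiplicity one cylinder. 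Only after this does Brakke's regularity theorem upgrade the varifold convergence to smooth convergence, making Theorem \ref{main_thm} applicable. Without an appeal to such a classification result (or a substitute argument), the density bound alone does not identify the asymptotic shrinker, and your proof does not close.
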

\begin{remark}
Forcing all tangent flows  to be cylindrical seemingly a tractable task, in light of the progress in Colding and Minicozzi's generic mean curvature program \cite{CM}, and in light of Brendle's result about genus-$0$ self shrinkers \cite{Brendle_self_shrink}. On the other hand, the recent papers of Bamler and Kleiner \cite{BK} and Hershkovits and White \cite{HW} indicate that information about \textit{all limit flows}, and not just tangent flows, is very helpful in addressing uniqueness questions. Theorem \ref{main_thm_b} allows one to convert information about tangent flows into information about other limit flows, which can be very relevant in light of the discussion above.
\end{remark} 

\vspace{5 mm}

Similarly to \cite{Haslhofer_bowl} and \cite{BC},  in our case too, Theorem \ref{main_thm_b} is a corollary of a classification result.
        
\begin{theorem}\label{main_thm}
Let $M_t\subseteq \mathbb{R}^3$ be a properly embedded translating solution to the MCF, i.e.
\[
H=\langle \tau,\nu \rangle,
\]
where $\tau=\frac{\partial}{\partial z}$. Then if the asymptotic shrinker of $M$ at $-\infty$ is a cylinder, then $M$ is the Bowl soliton. More precisely, letting $M_t:=M+t\tau$ be the evolution by MCF of $M$, then if $M_t/\sqrt{-t}\rightarrow S^{1}(\sqrt{2})\times \mathbb{R}$ smoothly and locally as $t\rightarrow -\infty$, then $M$ is the bowl soliton.   
\end{theorem}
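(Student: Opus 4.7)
The plan is to reduce to the Brendle--Choi classification recalled in the introduction: any strictly convex, non-collapsed, non-compact, ancient MCF in $\mathbb{R}^3$ must be the bowl soliton. Since $M_t=M+t\tau$ is automatically an ancient flow, and non-compactness is forced by the fact that the asymptotic shrinker $S^1(\sqrt 2)\times\mathbb{R}$ is non-compact, the task reduces to establishing strict convexity and non-collapsedness of $M$.

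\textbf{Step 1 (mean convexity).} The cylinder $S^1(\sqrt 2)\times \mathbb{R}$ has mean curvature $1/\sqrt 2 > 0$, so smooth local convergence $M_t/\sqrt{-t}\to S^1(\sqrt 2)\times \mathbb{R}$ immediately gives $H>0$ on an open ``paraboloidal end'' of $M$ (namely, the region corresponding, after undoing the parabolic rescaling, to any fixed compact piece of the limiting cylinder). On a translator, $H$ satisfies the drift-elliptic equation $\Delta_M H+\langle \tau^T,\nabla H\rangle+|A|^2 H=0$, which is a linear second-order equation with nonnegative zeroth-order coefficient $|A|^2\geq 0$. Since $H$ is nontrivial and nonnegative on an open set, Hopf's strong maximum principle forbids an interior zero, so $H>0$ globally on $M$.

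\textbf{Step 2 (non-collapsedness).} The asymptotic cylinder is non-collapsed with inscribed radius equal to $1/H$. I would invoke Andrews's maximum principle for the quantity $Z=\alpha H - k^{+}$ (with $k^+$ the inscribed curvature) along the MCF $M_t$, using the smooth convergence at $-\infty$ as an asymptotic ``initial condition'': pick a large back-time $t_0\ll 0$, obtain $\alpha$-non-collapsing on $M_{t_0}$ from the asymptotic cylinder together with the smooth higher-order convergence (the zeroth-order comparison on the cylinder giving $\alpha=1$; a quantitative upgrade, via the next-order term in the expansion about the cylinder, gives strict control for a fixed $\alpha>1$ on any compact piece), then propagate forward in time by the parabolic maximum principle of Andrews--Langford--McCoy. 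This is the most delicate step because the cylinder is borderline non-collapsed.

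\textbf{Step 3 (strict convexity and conclusion).} Having $H>0$, non-collapsedness, and ancientness, the Haslhofer--Kleiner convexity estimate yields $\lambda_1/H\geq 0$ uniformly, i.e.\ $M$ is weakly convex. If $\lambda_1$ vanished somewhere, Hamilton's strong-maximum-principle splitting would force $M$ to split off a line; but then the asymptotic shrinker at $-\infty$ would be a product of a line and a planar curve's shrinker, incompatible with $S^1(\sqrt 2)\times \mathbb{R}$ unless the line direction is the $\mathbb{R}$ factor, which in turn forces $M$ itself to be a cylinder---not a translator. Hence $M$ is strictly convex, and Brendle--Choi identifies $M$ with the bowl soliton.

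\textbf{Main obstacle.} Step 2 is the heart of the matter: the cylinder sits exactly on the non-collapsing boundary, so non-collapsedness cannot be read off from mere $C^0$ proximity to the cylinder. One must use the higher-order part of the smooth convergence---for instance, via an eigenmode analysis of the linearized rescaled MCF about the cylinder (in the spirit of Colding--Minicozzi)---to extract a quantitative, strict non-collapsing constant on a large but finite ``neck'' and then run Andrews's maximum principle on the remaining compact tip region.
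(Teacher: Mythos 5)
Your Step 1 does not work, and it is exactly the point this theorem is designed to get around. The hypothesis is only \emph{local} smooth convergence of the rescaled flow to the cylinder, so you learn $H>0$ only on the cylindrical end of $M$; nothing in the hypotheses controls the rest of $M$ (the paper explicitly notes that a priori $M$ may have other ends, and that one ``cannot argue by applying maximum principle arguments for $u/H$, as $H$ is not globally positive''). The strong maximum principle applied to $\Delta_M H+\langle \tau,\nabla H\rangle+|A|^2H=0$ only says: \emph{if} $H\geq 0$ everywhere and $H$ vanishes at an interior point, then $H\equiv 0$. It gives no obstruction to a solution that is positive on one open set and negative elsewhere (solutions of linear elliptic equations change sign all the time), so positivity on the end cannot be propagated to all of $M$. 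Consequently Step 2 also collapses: non-collapsedness is a global condition, the Andrews--Langford--McCoy maximum principle needs the inscribed/exterior sphere condition on an entire time slice (and global $H>0$ to even state it), and local closeness to the cylinder on a compact piece says nothing about other sheets of $M$ coming arbitrarily close. Without global mean convexity and non-collapsedness you cannot invoke the Haslhofer--Kleiner convexity estimate or the Brendle--Choi classification, so Step 3 has no hypotheses to stand on.

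The paper's route is designed precisely to avoid any global convexity/non-collapsing input: it works only on the end, using a non-tilting version of the Brendle--Choi neck improvement theorem iteratively to get rapid decay ($O(z^{-98})$) of the rotation function about a fixed vertical axis, writes the end in cylindrical coordinates as $r=f(z)+g(z,\theta)$ with $g$ tiny, runs Clutterbuck--Schn\"urer--Schulze-type ODE analysis on $f$ to show the end is a vertical graph with the precise bowl asymptotics $c+s^2/2-\log s+O(1/s)$, and then invokes the moving-plane rigidity theorem of Mart\'in--Savas-Halilaj--Smoczyk (plus unique continuation for the rotation function) to conclude $M$ is rotationally symmetric, hence the bowl. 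If you want to salvage your outline you would need an independent argument that $M$ is globally mean convex and non-collapsed, which is not available from the stated hypotheses.
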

\noindent From the static point of view there are two key points:
\begin{enumerate}[label=\Alph*.]
\item No global convexity assumptions are imposed.
\item The asymptotic assumption is very weak.
\end{enumerate}
In particular, Theorem \ref{main_thm} implies Theorem \ref{main_thm_b}, and generalizes the results of Haslhofer \cite{Haslhofer_bowl} and the one of Matrin,Savas-Halilaj and Smoczyk \cite{MHS}. In fact, the proof works by promoting the weak asymptotic assumptions about the asymptotic shrinker into the stronger ones of \cite{MHS}, from which the result follow.

\vspace{5mm}

Our proof relies on the result of \cite{MHS} (at the final step) and, crucially,  on the recent neck-improvement theorem of \cite{BC} (in the first step). In fact, we will need a simpler version of the neck improvement lemma, in which the axis of symmetry is fixed, up to translations.

Let us briefly discuss the proof. For $(x_0,y_0)\in \mathbb{R}^2$ Let $J_{(x_0,y_0)}$ be the rotation vector field around a translate of the $z$  axis, passing through $(x_0,y_0,0)$,  i.e.
\[
J_{(x_0,y_0)}=(x-x_0)\frac{\partial}{\partial y}-(y-y_0)\frac{\partial}{\partial x}
\]
and let 
\begin{equation}
u_{(x_0,y_0)}=\langle J_{(x_0,y_0)}, \nu \rangle
\end{equation}
be the rotation function. By a repeated use of (a primitive version of) the neck improvement theorem of \cite{BC}, we get that, as $z\rightarrow \infty$, one can choose $(x_0,t_0)$ (depending on $z$) such that $u_{(x_0,t_0)}$ decays rapidly. Unlike \cite{BC} and \cite{Haslhofer_bowl}, we can not argue by applying maximum principle arguments for $u/H$, as $H$ is not globally positive. Rather, the rapid decay of the rotation functions implies a rapid decay for the change of the symmetry axis. Thus one can choose a \textit{fixed} $z$-axis, passing through some $(x_0,y_0)\in \mathbb{R}^2$ independent of $z$ such that the cylindrical end of $M$ can be expressed in cylindrical co-ordinates as a graph of a rotationally symmetric part $f(z)$ plus a (small) angle dependent part $g(z,\theta)$.  Promoting the decay of $u_{(x_0,y_0)}$ to higher derivatives decay, we see that  the rotationally symmetric part $f$ satisfies the rotationally symmetric translator equation in cylindrical co-ordinates , up to small errors. Using ODE arguments much in line with \cite{CSS}, we see that $f$ is increasing with $z$ (when $z$ is large enough), and thus, can be written as a vertical graph over the $xy$ plane (minus a ball). The function defining the graph satisfies the graphical rotationally symmetric translator equation, up to small errors. Arguing like \cite{CSS}, we then show that $f$ satisfies the growth condition \eqref{CHM}. Thus, the end of $M$, which is the cylindrical graph of $f(z)+g(z,\theta)$ is itself a graph over the $xy$ plane (minus a ball), satisfying \eqref{CHM}. The proof is then completed by appealing to \cite{MHS}. 
\begin{remark}
The argument of \cite{Haslhofer_bowl} can also be used to get an asymptotic decay of the rotation function. The order of that decay, however, is insufficient for our approach.
\end{remark}
The organization of the paper is as follows: In section 2 we adapt Brendle-Choi neck improvement theorem to a simpler version, in which the axis of symmetry can move, but not rotate. In Section 3, which is the main one of this paper, we prove Theorem \ref{main_thm}. In Section 4, we show how Theorem \ref{main_thm} implies Theorem \ref{main_thm_b}.
\subsection*{Acknowledgments}
The author would like to thank Brian White for sharing his knowledge and insights about the mean curvature flow with him.
\section{Brendle-Choi neck improvement theorem}
We present an easier variant of Brendle-Choi's neck improvement theorem \cite[Theorem 4.4]{BC}. In our situation the axis of approximate cylindricality and rotational symmetry can move, but not tilt. In return, the improved rotational symmetry is around an axis parallel to the original one.
\begin{definition}
A point $(\bar{x},\bar{t})$ in a MCF $M_t$ is called \textbf{$\eps$-vertically cylindrical} if there exist a  $\lambda>0$ and $v\in S^{1}(\sqrt{2})\times \{0\}$ such that $\lambda(M_{\lambda^{-2}t+\bar{t}}-\bar{x})+v$ is $\eps$ close in the $C^{10}$ norm on $P((0,0),10)$ to the family of shrinking cylinders with $\tau$-axis $S^1(\sqrt{2(1-t)})\times \mathbb{R}$. Here $P((x,t),r)$ is the parabolic ball 
\begin{equation}
P((x,t),r)=B(x,t)\times [t-r^2,t].
\end{equation}
\end{definition} 
\begin{definition}
A point $(\bar{x},\bar{t})$ in a MCF $M_t$ is called \textbf{$\eps$-vertically symmetric} if there exists some $(x_0,y_0)$ such that $|u_{(x_0,y_0)}H|\leq \eps$ and $H>0$ on $P((\bar{x},\bar{t}), 10H(\bar{x},\bar{t})^{-1})$.
\end{definition}
We state a weaker version of Brendle Choi's neck improvement theorem \cite[Theorem 4.4]{BC}.
\begin{theorem}[Non-tilting Brendle-Choi's neck improvement theorem]\label{neck_improv}
There exist $L<\infty$ and $\eps_1>0$ such that with the following significance: Suppose $M_t$ is MCF and $(\bar{x},\bar{t})$  is a point such that every point in $P((\bar{x},\bar{t}),LH^{-1}(\bar{x},\bar{t}))$ is $\eps_1$-vertically cylindrical and is $\eps$-vertically symmetric with $\eps\leq \eps_1$. Then $(\bar{x},\bar{t})$ is $\eps/2$-vertically symmetric. 
\end{theorem}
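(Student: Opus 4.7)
The plan is to adapt the proof of Brendle--Choi's neck improvement theorem \cite[Theorem 4.4]{BC} to the simpler vertical setting, where the gauge freedom is reduced from the full Euclidean family of cylinder axes to the two-parameter family $(x_0, y_0)$. I argue by contradiction and compactness. Suppose the statement fails; then there exist sequences $L_k \to \infty$, $\eps_k \to 0$, mean curvature flows $M^k_t$, and base points $(\bar{x}_k, \bar{t}_k)$ such that every point in the large parabolic ball $P_k := P((\bar{x}_k, \bar{t}_k), L_k H^{-1}(\bar{x}_k, \bar{t}_k))$ is $\eps_k$-vertically cylindrical and $\eps_k$-vertically symmetric, while $(\bar{x}_k, \bar{t}_k)$ itself fails to be $\eps_k/2$-vertically symmetric. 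After parabolic rescaling so that $H(\bar{x}_k, \bar{t}_k) = 1$ and translating $(\bar{x}_k, \bar{t}_k)$ to the origin, the cylindricality assumption forces smooth convergence of the flows to the round shrinking cylinder $\Sigma_t = S^1(\sqrt{2(1-t)}) \times \mathbb{R}$ with axis $\{x = y = 0\}$.

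For each $k$, pick $(x_0^k, y_0^k)$ realizing the $\eps_k$-vertical symmetry at $(\bar{x}_k, \bar{t}_k)$. A preliminary gauge-comparison argument --- the direct analogue of the bounded-gauge lemma in \cite[Section 4]{BC} --- shows that the $\eps_k$-symmetry centers at every nearby point in $P_k$ must agree with $(x_0^k, y_0^k)$ up to $O(\eps_k)$; otherwise their $u_{(\cdot,\cdot)} H$ values would differ by a macroscopic amount on the approximating cylinder, contradicting each point being $\eps_k$-symmetric. Setting $w_k := \eps_k^{-1} u_{(x_0^k, y_0^k)} H$ then propagates the pointwise bound to $|w_k| \le C$ on all of $P_k$. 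Since $uH$ satisfies a linear parabolic equation whose coefficients converge smoothly to those of the Jacobi equation on $\Sigma_t$, interior estimates yield a locally smooth subsequential limit $w_\infty$ defined on the entire shrinking cylinder, solving that Jacobi equation with $|w_\infty| \le C$.

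Next I would invoke the spectral decomposition of the Jacobi operator in the Gaussian-weighted $L^2$ on $\Sigma_t$. Eigenfunctions factor as $\cos(m\theta), \sin(m\theta)$ times Hermite polynomials in $z$, and only finitely many eigenvalues are non-negative, each corresponding to an infinitesimal symmetry of the round shrinking cylinder. A key simplification in our setting is that $u_{(x_0, y_0)}$ vanishes identically on any rotationally symmetric graph over the vertical axis through $(x_0, y_0)$, so $w_\infty$ carries no $m=0$ component at all. Of the remaining non-negative modes --- translation ($m=1, n=0$), tilting ($m=1, n=1$), and cross-section distortion ($m=2, n=0$) --- the tilt is ruled out by the verticality aspect of $\eps_k$-vertical cylindricality at nearby points, and the distortion by cylindricality itself. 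Consequently $w_\infty = w_\infty^{\mathrm{stab}} + A\cos\theta + B\sin\theta$, with $w_\infty^{\mathrm{stab}}$ in the strictly stable subspace.

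The closing observation is that the gauge shift $(x_0, y_0) \mapsto (x_0 + a\eps_k, y_0 + b\eps_k)$ changes $u_{(x_0, y_0)} H$ by approximately $\eps_k H (b\cos\theta - a\sin\theta)$ on $\Sigma_t$. The non-improvement hypothesis, transplanted to the limit, therefore becomes $\sup_{P(0,10)} |w_\infty + \alpha\cos\theta + \beta\sin\theta| \ge 1/2$ for every $(\alpha, \beta) \in \mathbb{R}^2$. Choosing $(\alpha, \beta)$ to cancel the $A\cos\theta + B\sin\theta$ part yields $\sup_{P(0,10)} |w_\infty^{\mathrm{stab}}| \ge 1/2$. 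But $w_\infty^{\mathrm{stab}}$ is a uniformly bounded solution of the Jacobi equation living entirely in the strictly stable spectrum on a space-time domain whose size grows with $L_k \to \infty$, so standard backward parabolic Gaussian decay forces $\sup_{P(0,10)} |w_\infty^{\mathrm{stab}}|$ to be arbitrarily small, contradicting the lower bound $1/2$ once $L$ is chosen sufficiently large. The principal technical hurdle is the bounded-gauge lemma in the second paragraph: it is simpler than in \cite[Section 4]{BC} precisely because the gauge group is only the two-dimensional family of axis translations.
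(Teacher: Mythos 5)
Your overall strategy (contradiction, parabolic rescaling, passing to a limiting solution of the Jacobi equation on the exact shrinking cylinder, spectral decomposition, and cancellation by the translational gauge) is a genuinely different route from the paper, which does not re-run a compactness argument at all: it keeps Brendle--Choi's direct, quantitative Fourier-mode iteration and only observes that with a vertical reference axis one may take $a_1=b_1=0$, obtain estimates on the \emph{first} $z$-derivatives of the $m=1$ coefficients $v_1,w_1$, and hence take $A_1=B_1=0$ so that the improved axis does not tilt. The gap in your version sits exactly at the point where that modification does the work: the tilt mode. On the self-shrinking cylinder the mode $z\cos\theta,\ z\sin\theta$ ($m=1$, $n=1$) is a \emph{neutral} mode of the stability operator (eigenvalue $1-\tfrac{m^2}{2}-\tfrac{n}{2}=0$), corresponding to the fact that tilted round cylinders are again shrinkers; it is not killed by the ancient/backward-decay argument, and it cannot be cancelled by the purely horizontal gauge shifts $\alpha\cos\theta+\beta\sin\theta$ available in the vertical setting. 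Your one-line dismissal --- that it is ``ruled out by the verticality aspect of $\eps_k$-vertical cylindricality'' --- cannot work as stated, because the cylindricality is only controlled at scale $\eps_1^k$, which in your contradiction sequence may be enormously larger than $\eps_k$; after dividing by $\eps_k$ a tilt of size $O(\eps_k)$ is invisible to the cylindricality hypothesis and survives in $w_\infty$ with an $O(1)$ coefficient of $z\cos\theta$. Concretely, an exactly round shrinking cylinder whose axis is tilted by a small angle $\alpha$ is $\eps_1$-vertically cylindrical at every point, is $C\alpha$-vertically symmetric at every point (the best one can do about any vertical axis is $\sup|u_{(x_0,y_0)}H|\approx 10\alpha$ on the prescribed parabolic ball, uniformly at all points and scales), and admits no improvement by a factor $2$ about a vertical axis; so along your sequence nothing prevents the limit $w_\infty$ from being exactly this tilt mode, and no contradiction is reached. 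Any correct proof must therefore extract a quantitative estimate on the $m=1$, linear-in-$z$ part from the hypotheses at \emph{nearby} points at their own scales --- which is precisely the content of the paper's modification of the Brendle--Choi $m=1$ analysis, not a soft consequence of the axis being vertical.

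Secondary, fixable issues: your list of ``non-negative modes'' is off --- $(m,n)=(2,0)$ is strictly stable (eigenvalue $-1$), while the problematic non-stable modes with $m\ge 1$ are the translations $(1,0)$ and the tilts $(1,1)$; the ``backward parabolic Gaussian decay'' step should instead be the standard ancient-solution argument (strictly stable coefficients $a_j(\tau)=a_j(0)e^{\mu_j\tau}$ with $\mu_j<0$ blow up as $\tau\to-\infty$, so they vanish), and it must be run allowing polynomial growth, since your chained gauge-comparison across the large parabolic ball only yields bounds on $w_k$ that grow with the distance from the base point; and the exclusion of the $m=0$ component needs the quantitative argument ($u$ equals $\partial_\theta r$ times a weight $1+O(\eps_1^k)$, and $|\partial_\theta r|\lesssim\eps_k$ by hypothesis, so the $\theta$-average of $\eps_k^{-1}u$ is $O(\eps_1^k)$), not the remark that $u$ vanishes on rotationally symmetric graphs, since the $M^k$ are not rotationally symmetric.
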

\begin{remark}
Theorem \ref{neck_improv} is proved exactly as \cite[Theorem 4.4]{BC}. The fact that the rotation axis is vertical allows one to choose $a_1=b_1=0$ in their proof. As a result, In the analysis of the mode $m=1$, one gets the corresponding estimates for the first derivative of $v_1$ and $w_1$ (and not their second derivatives), which allows one to take $A_1=B_1=0$, and so the improved axis of rotation does not tilt. 
\end{remark}
\section{Proof of the classification theorem}
On this section, we prove Theorem \ref{main_thm}, regarding the Bowl soliton being the unique translator asymptotic to a cylinder. As a first step, we get an improved approximate rotational symmetry as $z\rightarrow \infty$.
\begin{lemma}\label{decay_lemma}
There exists a $\Lambda$ such that if $\bar{p}=(\bar{x},\bar{y},\bar{z})\in M$ with $\bar{z} \geq \Lambda$ and $\bar{x}^2+\bar{y}^2\leq 1000\bar{z}$ then there exists $(x_0,y_0)\in \mathbb{R}^2$ such that
\[
|u_{(x_0,y_0)}| \leq \frac{C}{z^{100}}
\]
on $\{(x,y,z)\in \mathbb{R}^3\;|\; x^2+y^2\leq 1000\bar{z},\;-10\leq z-\bar{z} \leq 0\}$.
\end{lemma}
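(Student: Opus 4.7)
The plan is to iterate Theorem \ref{neck_improv} (the non-tilting Brendle--Choi neck improvement) starting from a base symmetry that comes for free from the asymptotic cylinder. As a first step I would convert the hypothesis $M_t/\sqrt{-t}\to S^1(\sqrt{2})\times\RR$ into $\epsilon_1$-vertical cylindricality on a large spacetime region around $(\bar{p},0)$. Using $M_t = M + t\tau$, the parabolic rescaling at $(\bar{p},0)$ with scale $\lambda=1/\sqrt{\bar{z}}$ presents the rescaled flow at rescaled time $s$ as the translator viewed near the height $(1-s)\bar{z}$; by the asymptotic shrinker hypothesis applied at genuine time $t=-(1-s)\bar{z}\to-\infty$, this rescaled picture is $C^{10}$-close to a cylinder of radius $\sqrt{2(1-s)}$, which is exactly the shrinking cylinder appearing in the definition of vertical cylindricality. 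A standard diagonal extraction then upgrades this to $\epsilon_1$-vertical cylindricality on any rescaled parabolic ball $P((0,0),R)$, provided $\bar{z}\geq\Lambda_0(R,\epsilon_1)$. The same $C^{10}$-closeness simultaneously provides an initial $\epsilon_0\leq\epsilon_1$ vertical symmetry at each of these spacetime points, taking $(x_0,y_0)$ to be the center of the locally approximating cylinder.

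Starting from this base data, I would iterate Theorem \ref{neck_improv}: each application halves the symmetry parameter $\epsilon$ but demands the hypotheses on a parabolic $LH^{-1}$-neighborhood of the improved point. Performing $k$ nested iterations therefore requires the initial data on a parabolic region of parabolic radius $\sim kLH^{-1}(\bar{p})\sim kL\sqrt{\bar{z}}$ about $(\bar{p},0)$, i.e.\ of rescaled radius $kL$, and this is supplied by the first step as long as $\bar{z}$ is large depending on $k$. Choosing $k=\lceil 101\log_2\bar{z}\rceil$ yields $|u_{(x_0,y_0)}H|\leq \bar{z}^{-101}$ at $(\bar{p},0)$, and multiplying by $H^{-1}\leq C\sqrt{\bar{z}}$ (from $\epsilon_1$-cylindricality) gives $|u_{(x_0,y_0)}|\leq C\bar{z}^{-100}$. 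Theorem \ref{neck_improv} actually delivers the bound on the whole parabolic ball $P((\bar{p},0),10H^{-1}(\bar{p}))$; the target slice $\{x^2+y^2\leq 1000\bar{z},\,-10\leq z-\bar{z}\leq 0\}\cap M$ has extrinsic diameter $O(\sqrt{\bar{z}})\leq 10H^{-1}(\bar{p})$ (since near $\bar{p}$ the surface is an approximate cylinder of radius $\sqrt{2\bar{z}}$ and the $z$-range is bounded), so it is contained in this neighborhood and a single axis $(x_0,y_0)$ works uniformly.

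I expect the main technical work to be in the first step: carefully upgrading the locally smooth convergence of $M_t/\sqrt{-t}$ to $\epsilon_1$-vertical cylindricality on a spacetime region whose rescaled radius is allowed to grow like $\log\bar{z}$. The non-tilting aspect of Theorem \ref{neck_improv} is essential throughout the iteration, since it guarantees that the approximate axis of symmetry remains parallel to $\tau$ at every step, which in turn is consistent with the asymptotic cylinder being aligned with the translation direction.
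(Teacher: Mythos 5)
Your proposal is correct and is essentially the paper's own argument: the asymptotic shrinker hypothesis supplies $\eps_1$-vertical cylindricality and a starting $\eps_1$-vertical symmetry at all points of sufficiently large height $z-t$, and then Theorem \ref{neck_improv} is iterated on the order of $\log_2 \bar{z}$ times, which is possible because each halving of the symmetry parameter only costs a parabolic neighborhood of size $LH^{-1}\sim L\sqrt{\bar z}\ll \bar z$, and finally one divides by $H\sim \bar z^{-1/2}$ to pass from $|u_{(x_0,y_0)}H|$ to $|u_{(x_0,y_0)}|$. The only difference is bookkeeping: the paper organizes the iteration as a global induction over height thresholds $2^{j/400}\Lambda$ (proved by contradiction via the height-drop estimate \eqref{H_growth}) rather than your nested iteration at the fixed point $(\bar p,0)$, and correspondingly the ``growing region'' issue you flag as the main technical work dissolves, since vertical cylindricality is a pointwise condition at each point's own scale and holds uniformly at all heights above a fixed $\Lambda$.
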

\begin{remark}
The appearance of expressions like $\bar{x}^2+\bar{y}^2\leq 1000\bar{z}$ is technical, and can be ignored. Knowing that the asymptotic shrinker at $-\infty$ is a cylinder doesn't rule out, a priori, that $M$ has other ends for large $z$. We do know, however, that such parts have to be far away, and so our statement is about the end of $M$ that ``looks like'' a cylinder. 
\end{remark}
\begin{proof}
The proof of this lemma is similar to the proof of \cite[Proposition 5.3]{BC}. Let $M_t=M+t\tau$ and let $\eps_1$ and $L$ be the constants from the non tilting neck improvement theorem, Theorem \ref{neck_improv}.  Then there exists a $\Lambda$ such that if $p=(x,y,z)\in M_t$ satisfies $z-t \geq \Lambda$  and $x^2+y^2\leq 1000(z-t)$, then 
\begin{equation}\label{H_growth}
H(p)(z-t)\geq 2(1-2^{-1/400})^{-1}L
\end{equation}
and $(p,t)$ is $\eps_1$-vertically symmetric and is $\eps_1$-vertically cylindrical. 
\begin{claim}
If $p=(x,y,z)\in M_t$ satisfies $z-t \geq 2^{j/400}\Lambda$ and $x^2+y^2 \leq 1000(z-t)$ then $(p,t)$ is $2^{-j}\eps_1$-vertically symmetric. 
\end{claim}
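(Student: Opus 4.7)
The plan is to prove the claim by induction on $j$, with Theorem \ref{neck_improv} supplying the inductive step. The base case $j=0$ is precisely the $\eps_1$-vertical symmetry of $(p,t)$ guaranteed by the choice of $\Lambda$ recorded just above the claim.

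For the inductive step, assume the claim at level $j$, and let $p=(x,y,z)\in M_t$ satisfy $z-t\geq 2^{(j+1)/400}\Lambda$ together with $x^2+y^2\leq 1000(z-t)$. I will apply Theorem \ref{neck_improv} at $(p,t)$ with $\eps=2^{-j}\eps_1\leq\eps_1$, so I need to check that every flow point $(p',t')\in P\bigl((p,t),LH^{-1}(p,t)\bigr)$ is both $\eps_1$-vertically cylindrical and $2^{-j}\eps_1$-vertically symmetric. Both conditions follow from the inductive hypothesis (and from the initial choice of $\Lambda$) as soon as I verify that $(p',t')$ itself satisfies $z'-t'\geq 2^{j/400}\Lambda$ and $(x')^2+(y')^2\leq 1000(z'-t')$.

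The height bound is exactly what the growth estimate \eqref{H_growth} is tuned for. From $H(p)(z-t)\geq 2(1-2^{-1/400})^{-1}L$ I get $LH^{-1}(p,t)\leq (1-2^{-1/400})(z-t)/2$; combining this with $|z'-z|\leq LH^{-1}(p,t)$ and $t'\leq t$ (so $-t'\geq -t$) yields
\[
z'-t' \;\geq\; (z-t) - LH^{-1}(p,t) \;\geq\; \tfrac{1+2^{-1/400}}{2}(z-t) \;\geq\; 2^{-1/400}(z-t) \;\geq\; 2^{j/400}\Lambda.
\]
The width bound is softer: the horizontal displacement of $(p',t')$ from $(p,t)$ is at most $LH^{-1}(p,t)$, which is tiny compared to the $1000(z-t)$ buffer (and on an $\eps_1$-vertically cylindrical neck one even has $x^2+y^2$ within $O(\eps_1)$ of $2(z-t)$). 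With both hypotheses verified, Theorem \ref{neck_improv} gives that $(p,t)$ is $2^{-(j+1)}\eps_1$-vertically symmetric and the induction closes.

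The main obstacle is not conceptual but the tight bookkeeping: the constant $2(1-2^{-1/400})^{-1}L$ in \eqref{H_growth} has been calibrated precisely so that one loses only a factor of $2^{1/400}$ in height per inductive step, which is exactly the rate at which the height hypothesis weakens from level $j+1$ to level $j$. This is the same geometric-series mechanism as in \cite[Proposition 5.3]{BC}, which the author is essentially adapting to the non-tilting setting.
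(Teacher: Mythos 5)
Your proof is correct and follows essentially the same route as the paper: induction on $j$ with the non-tilting neck improvement theorem, using \eqref{H_growth} to guarantee that the parabolic neighborhood $P((p,t),LH^{-1}(p,t))$ stays in the region where the inductive hypothesis (and the $\eps_1$-cylindricality from the choice of $\Lambda$) applies. The paper merely phrases the inductive step contrapositively (a ``bad'' point in the neighborhood would force $z'-t'$ below $2^{(j-1)/400}\Lambda$, contradicting the same chain of inequalities you use), and, like you, it treats the width condition $x'^2+y'^2\leq 1000(z'-t')$ as a soft point settled by the neck structure.
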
 
\begin{proof}[Proof of claim]
We argue by induction and contradiction. When $j=0$, this holds by our assumption. Now, assume that this does not hold for $j$. Then there exists a point $(x,y,z)=p\in M_t$ with $z-t\geq  2^{j/400}\Lambda$ and $x^2+y^2 \leq 1000(z-t)$ which is not $2^{-j}\eps_1$-vertically symmetric. But then, by the neck improvement theorem, there is a point in space time $(p',t')$ with 
\[
t-L^2H(p,t)^{-2}\leq t'\leq t,\;\;\;|p'-p|\leq LH(p,t)^{-1},
\] 
which is not $2^{j-1}\eps_1$-vertically symmetric. Setting $p'=(x',y',z')$ we obtain, by the induction hypothesis and by \eqref{H_growth}
\begin{align}
z-t &\leq  z-t' \leq |p-p'|+(z'-t') \leq LH(p,t)^{-1}+2^{(j-1)/400}\Lambda \\
&\leq \big(\frac{1-2^{-1/400}}{2}+2^{-1/400}\big)(z-t)<z-t.
\end{align}
which is a contradiction. 
\end{proof}
To complete the proof of the lemma, consider $M$ as a static object. For each $z$ we can find a $j$ such that $2^{j/400}\Lambda<z\leq 2^{(j+1)/400}\Lambda$. The previous claim implies that there exists some $(x_0,y_0)$ such that 
\[
|u_{(x_0,y_0)}H| \leq 2^{-j}\eps_1\leq \frac{2\Lambda^{400}}{z^{400}}\eps_1.
\]
As $H\cong \frac{1}{\sqrt{z}}$, the result follows.
\end{proof}
\begin{remark}
Arguing as in \cite[Theorem 5.4]{BC}, one can replace \cite[Proposition 5.3]{BC} with Lemma \ref{decay_lemma} to obtain Haslhofer's theorem \cite{Haslhofer_bowl}:  if $M$ is globally mean convex and non-collapsed translator in $\mathbb{R}^3$ then it is the bowl soliton. The proof of the main theorem in \cite{BC} actually builds upon Haslhofer's result, the original proof of which involves other (related) ideas. Lemma \ref{decay_lemma} thus gives a direct proof, along the lines of \cite{BC} to Haslhofer's result, making the argument in \cite{BC} self contained. Of course, the main theorem of \cite{Haslhofer_bowl} will also follow once we establish Theorem \ref{main_thm}.
\end{remark}
\vspace{5mm}
Now, the point in $(x_0,y_0)$ of Lemma \ref{decay_lemma} depends on $z$. However, it clearly follows that if $|z-z'| \leq 10$, then $|(x_0,y_0)^z-(x_0,y_0)^{z'}| \leq \frac{C}{z^{99}}$, where $(x_0,y_0)^z$ is the $(x_0,y_0)$ that the lemma produces for height $z$. Thus, as 
\[
\int_{\bar{z}}^{\infty}\frac{1}{z^{99}}dz=\frac{C}{\bar{z}^{98}}
\]
the $(x_0,y_0)^{z}$ converge as $z\rightarrow \infty$ and we get the following rotational decay about a fixed axis:  
\begin{lemma}\label{decay_fixed}
There exists a $\Lambda$ and $(x_0,y_0)$ such that if $\bar{p}=(\bar{x},\bar{y},\bar{z})\in M$ with $\bar{z} \geq \Lambda$ and $\bar{x}^2+\bar{y}^2\leq 1000\bar{z}$ then 
\[
|u_{(x_0,y_0)}(\bar{p})| \leq \frac{C}{z^{98}}.
\]
\end{lemma}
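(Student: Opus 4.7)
The plan is to extract a single axis by showing that the centers $(x_0,y_0)^z$ produced by Lemma \ref{decay_lemma} form a Cauchy family as $z\to\infty$, and then to convert this into a decay bound around the limit axis. The key algebraic observation is that for any two points $(x_0,y_0),(x_0',y_0')\in\mathbb{R}^2$, the difference of their rotation vector fields is the constant horizontal translation
\[
J_{(x_0,y_0)} - J_{(x_0',y_0')} = (x_0'-x_0)\tfrac{\partial}{\partial y} + (y_0 - y_0')\tfrac{\partial}{\partial x} =: v,
\]
so the corresponding difference of rotation functions is simply $u_{(x_0,y_0)} - u_{(x_0',y_0')} = \langle v,\nu\rangle$.

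I would first fix a large $\bar z$ and apply this to heights $z,z'$ with $|z-z'|\leq 10$ and $z,z'$ comparable to $\bar z$, so that both centers $(x_0,y_0)^z$ and $(x_0,y_0)^{z'}$ are valid on the overlap of their slabs. The identity above, combined with Lemma \ref{decay_lemma} for both centers, yields a horizontal vector $v$ with $|\langle v,\nu\rangle|\leq 2C/\bar z^{100}$ on the overlap. To turn a bound on $\langle v,\nu\rangle$ into a bound on $|v|$ itself, I would exploit the cylindrical structure of the end: since the asymptotic shrinker of $M$ at $-\infty$ is $S^1(\sqrt 2)\times\mathbb{R}$, the cross section of $M$ at height near $\bar z$ is $C^1$-close to a round horizontal circle, so the unit normal along such a cross section sweeps out essentially every horizontal direction. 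Picking a point where $\nu$ is nearly aligned with $v/|v|$ then gives $|(x_0,y_0)^z - (x_0,y_0)^{z'}| = |v|\leq C/\bar z^{99}$, where one power of $z$ is sacrificed as a harmless safety margin for the $C^1$-error to the model cylinder.

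Summability of $\zeta^{-99}$ then guarantees that $(x_0,y_0)^z$ converges as $z\to\infty$ to some $(x_0,y_0)\in\mathbb{R}^2$, with
\[
|(x_0,y_0)^z - (x_0,y_0)|\leq \int_z^\infty \frac{C}{\zeta^{99}}\,d\zeta \leq \frac{C}{z^{98}}.
\]
Applying the algebraic identity one last time, $u_{(x_0,y_0)} = u_{(x_0,y_0)^z} + \langle v_z,\nu\rangle$ with $|v_z|\leq C/z^{98}$, and invoking Lemma \ref{decay_lemma} for the first term yields $|u_{(x_0,y_0)}(\bar p)|\leq C/z^{100} + C/z^{98}\leq C/z^{98}$, as required. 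The only nontrivial step is the conversion from $|\langle v,\nu\rangle|$-bounds to $|v|$-bounds using the cylindrical geometry; everything else is elementary Cauchy-and-integration bookkeeping.
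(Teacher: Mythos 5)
Your proposal is correct and follows essentially the same route as the paper, whose own argument is just the short paragraph preceding the lemma: nearby heights give centers differing by at most $C/z^{99}$, and integrating $\int_{\bar z}^{\infty} z^{-99}\,dz \leq C\bar z^{-98}$ yields convergence to a fixed axis and the stated decay. The one step the paper dismisses with ``it clearly follows'' --- converting the bound on $\langle v,\nu\rangle$ for the constant horizontal vector $v = J_{(x_0,y_0)}-J_{(x_0',y_0')}$ into a bound on $|v|$ by using that the nearly round cross-section makes $\nu$ sweep essentially all horizontal directions --- is exactly what you supply, so your write-up is a correct fleshing-out of the paper's sketch rather than a different argument.
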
  
From now on, we assume without loss of generality that $(x_0,y_0)=(0,0)$ and we write $u=u_{0,0}$.
For $z\geq \Lambda$ , we can write the cylindrical end of $M$ in cylindrical co-ordinates around the $z$-axis:
\[
M\cap \{z\geq \Lambda\}=\{(r(z,\theta)\cos\theta,r(z,\theta)\sin\theta,z)\;|\;z\geq \Lambda,\;\theta\in S^1\},
\]
where $r=\sqrt{2z}+o(\sqrt{2z})$.
\begin{proposition}
In the co-ordinates $z,\theta$, the function $\frac{\partial r}{\partial\theta}$ satisfies the estimate
\begin{equation}
\frac{\partial^{i+j}}{\partial \theta^i\partial z^j}\frac{\partial r}{\partial\theta} \leq \frac{C}{z^{50}},
\end{equation} 
for $z$ large enough, and for $i+j\leq 10$.
\end{proposition}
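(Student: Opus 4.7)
The plan is to upgrade the $C^0$ decay of the rotation function provided by Lemma \ref{decay_fixed} into polynomial decay of all derivatives of $r_\theta$ up to order $10$, by combining an explicit cylindrical-coordinate formula for $u_{(0,0)}$ with an interpolation argument powered by the smooth asymptotic convergence of $M$ to the cylinder.

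First, I would compute $u_{(0,0)}$ in cylindrical coordinates. Parametrizing the cylindrical end of $M$ as $X(z,\theta) = (r\cos\theta, r\sin\theta, z)$ and using the identity $J_{(0,0)}|_X = X_\theta - r_\theta\,(\cos\theta,\sin\theta,0)$, a short computation with the unnormalized normal $X_z \times X_\theta$ yields
\[
u_{(0,0)} = \langle J_{(0,0)}, \nu\rangle = \frac{r\, r_\theta}{\sqrt{r^2(1 + r_z^2) + r_\theta^2}}.
\]
Since $r \sim \sqrt{2z}$ and $r_z = O(z^{-1/2})$ on the cylindrical end, the denominator divided by $r$ lies between two positive constants for $z$ large, and Lemma \ref{decay_fixed} thus gives the $C^0$ bound $|r_\theta(z,\theta)| \leq C/z^{98}$.

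Second, I would extract higher-order regularity from the asymptotic hypothesis. The assumption that $M_t/\sqrt{-t} \to S^1(\sqrt 2)\times\RR$ smoothly and locally as $t \to -\infty$, combined with a standard parabolic bootstrap for MCF, implies that for every base point $p_0 = (0,0,\bar z)$ on the cylindrical end with $\bar z$ large, the rescaled surface $\lambda(M - p_0)$ (with $\lambda = 1/\sqrt{2\bar z}$) is $C^N$-close, for every fixed $N$, to the standard cylinder $S^1 \times \RR$ on a neighborhood of size $O(1)$ in rescaled coordinates, with closeness tending to $0$ as $\bar z \to \infty$. In rescaled cylindrical coordinates, the function $\tilde r(\tilde z,\theta) := \lambda\, r(\lambda^{-1}\tilde z + \bar z,\theta)$ therefore satisfies $\|\tilde r - 1\|_{C^N} = o(1)$.

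Finally, I would apply the classical interpolation inequality $\|f\|_{C^k} \leq C_{N,k}\,\|f\|_{C^0}^{1-k/N}\,\|f\|_{C^N}^{k/N}$ to $f = \tilde r_\theta = \lambda\, r_\theta$ on the rescaled neighborhood. The $C^0$ norm is bounded by $C/\bar z^{98.5}$ (since the $C^0$ bound on $r_\theta$ from the first paragraph is uniform on the neighborhood), and the $C^N$ norm is uniformly bounded in $\bar z$. Converting back via $\partial_z^j \partial_\theta^i r_\theta = (\sqrt{2\bar z})^{1-j}\,\partial_{\tilde z}^j\partial_\theta^i \tilde r_\theta$ then gives $|\partial_z^j \partial_\theta^i r_\theta| \leq C/\bar z^p$ for $i + j \leq 10$, with $p$ as large as desired by choosing $N$ large (for instance $N = 100$ gives $p \geq 85$); in particular $p \geq 50$ is easily achieved. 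The one technical point requiring genuine care is the uniform (in $p_0$) $C^N$ closeness of the rescaled surface to the cylinder, which is a routine parabolic regularity consequence of the smooth convergence hypothesis.
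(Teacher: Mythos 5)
Your proposal is correct, but it reaches the stated estimate by a genuinely different mechanism than the paper. The paper works with the rotation function itself along the translating flow $M_t=M+t\tau$: after parabolic rescaling it uses that $\tilde u$ satisfies the linearized MCF (Jacobi) equation $\partial_t\tilde u=\Delta\tilde u+|A|^2\tilde u$ on a background $C^{20}$-close to the shrinking cylinders, so interior parabolic estimates upgrade the $C^0$ decay of $u$ from Lemma \ref{decay_fixed} to $C^{10}$ decay of $\tilde u$ \emph{with no loss in the rate}, and only afterwards converts to $\partial_\theta \tilde r$ through the same algebraic identity you derive (in rescaled form $\tilde u=\tilde r_\theta\,\bigl(1+\tilde r^{-2}\tilde r_\theta^2+\tilde r_z^2\bigr)^{-1/2}$). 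You instead convert to $r_\theta$ already at the $C^0$ level (your cross-product computation and the resulting bound $|r_\theta|\leq C z^{-98}$ are correct) and then interpolate between this decay and uniform $C^N$ bounds coming from the hypothesis of smooth local convergence to the cylinder; this costs a factor $k/N$ in the exponent, but since only $z^{-50}$ is required even $N$ slightly above $20$ suffices, so the loss is harmless, and the scaling factor $(\sqrt{2\bar z})^{1-j}$ is correctly accounted for. The trade-off: the paper's parabolic route preserves the full decay rate and needs only a fixed finite order of closeness to the cylinder, while your route is more elementary --- no evolution equation for $u$, only static interpolation on a fixed-size rescaled cylinder piece (where the standard interpolation inequality, with its harmless additional lower-order term on bounded domains, applies). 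Your one flagged technical point, the uniformity in $\bar z$ of the $C^N$ closeness, is immediate from the hypothesis that $M_t/\sqrt{-t}$ converges smoothly and locally, since heights $z\approx\bar z$ of $M$ correspond to a fixed compact region of $M_{-\bar z}/\sqrt{\bar z}$; no separate parabolic bootstrap is needed, and Lemma \ref{decay_fixed} indeed applies uniformly on that region because its points satisfy $x^2+y^2\approx 2z\leq 1000z$ and $z\geq\bar z/2$ there.
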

\begin{proof}
Considering the flow $M_t$, the asymptotic assumption implies that 
\[
M_{-t_0t}/\sqrt{-t_0}\xrightarrow{t_0\rightarrow -\infty}S^1(\sqrt{-2t})\times \mathbb{R}.
\]
in $C^{20}(B(0,100)\times [-100,-1])$. Setting
\[
\tilde{u}(z,\theta,t)=\frac{u(\sqrt{-t_0}(z-t_0t),\theta,-t_0t)}{\sqrt{-t_0}},\;\;\; \tilde{r}(z,\theta,t)=\frac{r(\sqrt{-t_0}(z-t_0t),\theta,-t_0t)}{\sqrt{-t_0}},
\]
the previous lemma implies that $|\tilde{u}|\leq Ct_0^{-80.5}$ in $B(0,100)\times [-100,-1])$. As $\tilde{u}$ satisfies the linearized MCF equation 
\[
\partial_t \tilde{u}=\Delta \tilde{u}+|A|^2\tilde{u}.
\]
on a open manifold which is very close in $C^{20}$ to a family of shrinking cylinders, it follows from standard parabolic estimates that 
\begin{equation}
\frac{\partial^{i+j}}{\partial \theta^i\partial z^j}\frac{\partial \tilde{u}}{\partial\theta} \leq \frac{C}{t_0^{80.5}},
\end{equation}
whenever $i+j\leq 10$.
on $B(0,25)\times [-25,-1]$, and, setting $\tilde{u}(z,\theta)=\tilde{u}(z,\theta,-1)$ one gets the same estimate without the time dependence.
Now, the relation between $\tilde{u}$ and $\tilde{r}$ is given by
\begin{equation}
\tilde{u}=\frac{1}{\sqrt{1+\tilde{r}^{-2}\Big(\frac{\partial \tilde{r}}{\partial \theta}\Big)^2+\Big(\frac{\partial \tilde{r}}{\partial z}\Big)^2}}\frac{\partial \tilde{r}}{\partial \theta}.
\end{equation}
As $\frac{\partial \tilde{r}}{\partial \theta}$ and $\frac{\partial \tilde{r}}{\partial z}$ are as small as we want in $C^{20}$, we get the same $C^{10}$ estimate for $\frac{\partial \tilde{r}}{\partial \theta}$ as we do for $\tilde{u}$ (in the $\theta,z$ co-ordinates). The result now follows from scaling back to $r$.
\end{proof}
The following proposition is an immediate consequence of the convergence to the cylinders and scaling.
\begin{proposition}
\[
\Big|\frac{\partial^i r}{\partial z^i}\Big| \leq Cz^{i+0.5}
\]
\end{proposition}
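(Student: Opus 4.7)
The plan is to follow exactly the pattern of the preceding proposition, replacing $\partial r/\partial \theta$ (whose decay relied on the stronger Lemma \ref{decay_fixed}) by $r$ itself, and using only the baseline fact that the rescaled radial function converges in $C^{20}$ to the constant $\sqrt{2}$. No PDE estimates beyond what the asymptotic hypothesis supplies are needed.

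First I would reuse the parabolic rescaling set up in the previous proof: for $t_0\to -\infty$, put
\[
\tilde r(z,\theta,t) \;=\; \frac{r\bigl(\sqrt{-t_0}(z - t_0 t),\theta,-t_0 t\bigr)}{\sqrt{-t_0}}.
\]
The asymptotic-shrinker hypothesis $M_t/\sqrt{-t}\to S^1(\sqrt{2})\times \mathbb{R}$ translates directly into $\tilde r\to \sqrt{2}$ in $C^{20}$ on $B(0,100)\times[-100,-1]$, so in particular $|\partial_z^{i}\tilde r|\leq C$ uniformly in $t_0$ large, for $i\leq 10$, on a slightly smaller region such as $B(0,25)\times[-25,-1]$. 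Freezing $t=-1$, exactly as in the preceding proposition, one obtains the same bound time-independently for $\tilde r(z,\theta):=\tilde r(z,\theta,-1)$.

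The second step is a chain-rule calculation. Since the inner change of variables $z\mapsto \sqrt{-t_0}\,z - t_0\cdot t_0\cdot t$ is affine in $z$ with Jacobian $\sqrt{-t_0}$, one gets inductively
\[
\partial_z^{i}\tilde r \;=\; (\sqrt{-t_0})^{\,i-1}\,\partial_z^{i} r
\]
evaluated at the corresponding preimage, whose height is of order $-t_0$. Combining with the uniform bound on $\tilde r$ gives
\[
|\partial_z^{i} r| \;\leq\; C\,(\sqrt{-t_0})^{1-i}\;\leq\; C\, z^{(1-i)/2}
\]
at any basepoint of height $z\sim -t_0$. This scaling-sharp inequality is stronger than (and in particular implies) the stated bound $C\,z^{\,i+0.5}$ for every $i\geq 0$, so the proposition follows.

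I expect essentially no obstacle: the entire content is bookkeeping for how parabolic rescaling converts $C^{k}$-closeness to the cylinder into polynomial-in-$z$ bounds on derivatives of $r$, and the rescaling machinery has already been developed in the preceding proposition. The real work there was in substituting the very strong decay of Lemma \ref{decay_fixed} for the naive $C^{20}$ boundedness on the cylinder; here one simply uses the cylindrical convergence directly, which is why this is, as advertised, ``an immediate consequence of the convergence to the cylinders and scaling.''
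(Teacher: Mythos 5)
Your argument is exactly what the paper intends by ``an immediate consequence of the convergence to the cylinders and scaling'': the parabolic rescaling $\tilde r$ converges in $C^{20}$ to the constant cylinder radius, and unwinding the chain rule (Jacobian $\sqrt{-t_0}\sim\sqrt{z}$ in the $z$-variable) converts the uniform derivative bounds on $\tilde r$ into the scaling-sharp bounds $|\partial_z^i r|\leq Cz^{(1-i)/2}$, which in particular imply the stated estimate. The proposal is correct and takes essentially the same (one-line) approach as the paper; the small slip in transcribing the affine change of variables is harmless since only its $z$-Jacobian enters.
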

By the previous two propositions, it follows that for $z$ large enough, we can write $r(z,\theta)=f(z)+g(z,\theta)$, where 
\begin{equation}\label{g_est}
||g||_{C^{10}}\leq \frac{C}{z^{50}}.
\end{equation}
Now, by \cite[Eq. 2.1]{Knopf-Gang}, $r$ satisfies
\begin{equation}
-\frac{\partial r}{\partial z}=\frac{\Big(1+\Big(\frac{1}{r}\frac{\partial r}{\partial \theta}\Big)^2\Big)\frac{\partial^2 r}{\partial z^2}+\frac{1+(\frac{\partial r}{\partial z})^2}{r^2}\frac{\partial^2 r}{\partial \theta^2}-2\frac{\frac{\partial r}{\partial z}(\frac{\partial r}{\partial \theta})^2}{r^3}\frac{\partial^2 r}{\partial \theta \partial r}-\frac{(\frac{\partial r}{\partial \theta})^2}{r^3}}{1+\Big(\frac{\partial r}{\partial z}\Big)^2+\Big(\frac{1}{r}\frac{\partial r}{\partial \theta}\Big)^2}-\frac{1}{r},
\end{equation} 
and so, in light of the above, evaluating the equation in $\theta=\theta_0$, the function $f(z)$ satisfies the following differential equation:
\begin{equation}\label{est_1}
-f_z=\frac{(1+a(z))f_{zz}}{1+f_{z}^2}-\frac{1}{f}+b(z)
\end{equation}
with
\begin{equation}\label{est_2}
|a(z)|+|b(z)| \leq \frac{C}{z^{10}},
\end{equation}
and $f(z)=\sqrt{2z}+o(\sqrt{z})$.
We are now in the position to use the ODE methods of Clutterback-Schnurer-Schulze \cite{CSS} to first show that the cylindrical end of $M$ is a graph over the $xy$ plane, and then study its asymptotic properties.
\begin{lemma}
For $z$ large enough, $f$ is increasing.
\end{lemma}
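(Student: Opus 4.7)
The plan is an ODE/phase-plane argument on \eqref{est_1} in the style of \cite{CSS}, based on two inputs: the error terms $a(z),b(z)$ are dominated by $1/f(z)\sim 1/\sqrt{2z}$ once $z$ is large (since $|a|+|b|\leq C/z^{10}$ by \eqref{est_2}), and $f(z)\to\infty$.

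First I would evaluate \eqref{est_1} at a hypothetical critical point $z_0$ of $f$ with $z_0$ large. Setting $f_z(z_0)=0$ in \eqref{est_1} gives
\[
(1+a(z_0))\,f_{zz}(z_0) \;=\; \frac{1}{f(z_0)} - b(z_0).
\]
For $z_0$ larger than some threshold $\Lambda_0$, the right-hand side is strictly positive (since $1/f(z_0)\sim 1/\sqrt{2z_0}\gg C/z_0^{10}\geq|b(z_0)|$) and $1+a(z_0)>0$. Hence $f_{zz}(z_0)>0$, so every critical point of $f$ above height $\Lambda_0$ is a strict local minimum.

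Next I would use this to show $f_z$ cannot pass from positive values to zero in the region $z\geq\Lambda_0$: suppose $f_z(z_1)>0$ for some $z_1\geq\Lambda_0$ and let $z_2$ be the infimum of the zero set of $f_z$ on $(z_1,\infty)$. By continuity and the intermediate value theorem, $f_z>0$ on $[z_1,z_2)$ while $f_z(z_2)=0$; the difference quotient $(f_z(z_2)-f_z(z))/(z_2-z)$ is then strictly negative, so $f_{zz}(z_2)\leq 0$, contradicting the previous paragraph. Therefore, once $f_z$ becomes positive past $\Lambda_0$, it stays positive forever.

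Finally, such a $z_1>\Lambda_0$ with $f_z(z_1)>0$ must exist: the asymptotic $f(z)=\sqrt{2z}+o(\sqrt z)$ gives $f(z)\to\infty$, so we cannot have $f_z\leq 0$ on all of $[\Lambda_0,\infty)$ (else $f$ would be non-increasing and hence bounded above by $f(\Lambda_0)$). Combining the three steps yields $f_z>0$ for all $z\geq z_1$, proving the lemma. The only delicate point is verifying $|b(z)|\ll 1/f(z)$ at the critical-point step, which is immediate from \eqref{est_2} and the asymptotic for $f$; the rest is a routine sign-tracking argument.
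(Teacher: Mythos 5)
Your proposal is correct and follows essentially the same route as the paper: evaluate \eqref{est_1} at a hypothetical critical point to get $f_{zz}>0$ there (using that $1/f\gg |a|+|b|$ by \eqref{est_2}), note that $f\to\infty$ forces some $z_1$ with $f_z(z_1)>0$, and derive a contradiction at the first subsequent zero of $f_z$. The paper's proof is just a terser version of this same sign-tracking argument.
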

\begin{proof}
As $f\rightarrow \infty$ when $z\rightarrow \infty$, there must be a point $z_0$ with $f'(z_0)> 0$, and  where estimates \eqref{est_1},\eqref{est_2} are valid.
But then, if $f$ has a critical point $z_1$, with $z_1>z_0$ we get from \eqref{est_1} and \eqref{est_2} that $f_{zz}(z_1)>0$ which is a contradiction.  
\end{proof}
The previous lemma shows that the cylindrical graph of $f$ is in fact a (rotationally symmetric) graph over the $xy$ plane. Letting $z(s)$ be the inverse of $f(z)$ (which is defined when $z$ is large enough), we have
\[
f_z=\frac{1}{z_s},\;\;\;\;f_{zz}=-\frac{z_{ss}}{z_s^3}.
\] 
Thus, $z$ satisfies the equation
\begin{equation}
1=\frac{(1+\alpha(s))z_{ss}}{1+z_s^2}+\Big(\frac{1}{s}+\beta(s)\Big)z_s,
\end{equation}
where 
\[
|\alpha(s)|+|\beta(s)|\leq \frac{C}{s^{20}}.
\]
Setting $\phi(s)=z_s(s)$ we have:
\begin{equation}
\phi'=(1+\phi^2)\Big(1+\gamma(s) -\big(\frac{1}{s}+\delta(s)\big)\phi\Big)
\end{equation}
where 
\[
|\gamma(s)|+|\delta(s)|\leq \frac{C}{s^{10}}.
\]
We can now argue as in  \cite[Lemma 2.1]{CSS} to obtain the following asymptotics.
\begin{lemma}\label{growth_lemma}
\begin{equation}\label{der_et}
\phi(s)=s-\frac{1}{s}+O(s^{-2})
\end{equation}
\end{lemma}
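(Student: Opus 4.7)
The plan is to perform a bootstrap ODE analysis, mimicking the approach of \cite[Lemma 2.1]{CSS}, with the small terms $\gamma(s),\delta(s)$ treated as perturbations. The key structural observation is that the unperturbed equation $\phi'=(1+\phi^2)(1-\phi/s)$ admits the \emph{exact} inner expansion $\phi=s-1/s+\text{(lower order)}$ because, writing $\phi=s-\tfrac{1}{s}+\psi$, the substitution reveals that $-1/s$ is literally a particular solution of the linearization $\psi'+(s+1/s)\psi=-1$ around $\phi=s$.

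First I would establish the rough asymptotic $\phi(s)/s\to 1$. Since the asymptotic shrinker is the cylinder, the previous work gives $f(z)=\sqrt{2z}(1+o(1))$; passing to the inverse and differentiating (justified by the $C^{10}$-estimates on $r$ already established above) yields $z(s)=s^{2}/2(1+o(1))$ and hence $\phi(s)=z_{s}=s(1+o(1))$.

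Next I would substitute $\phi=s-1/s+\psi$ into the ODE and expand. A direct (but tedious) computation, using $1+\phi^{2}=s^{2}-1+2s\psi+O(1)+O(s\psi)$ and collecting terms, gives
\[
\psi'+\Big(s+\frac{1}{s}\Big)\psi=E(s,\psi),\qquad |E|\le C\Big(\psi^{2}+\frac{|\psi|}{s^{2}}+s^{2}|\gamma|+s^{3}|\delta|\Big).
\]
With $|\gamma|+|\delta|\le C/s^{10}$, the perturbations contribute only $O(1/s^{7})$, so once an a priori bound $|\psi|\le 1$ is in hand, the right-hand side is $O(1/s^{2})$. Applying the integrating factor $s\,e^{s^{2}/2}$ and integrating from some large $s_{0}$ to $s$,
\[
s\,e^{s^{2}/2}\psi(s)=s_{0}\,e^{s_{0}^{2}/2}\psi(s_{0})+\int_{s_{0}}^{s}t\,e^{t^{2}/2}E(t,\psi(t))\,dt,
\]
and a standard integration-by-parts estimate for $\int^{s}t\,e^{t^{2}/2}t^{-2}\,dt$ yields $\psi(s)=O(1/s^{2})$, which is exactly the claim.

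The main obstacle is closing the bootstrap: bridging the gap between the crude $\psi=o(s)$ supplied by step~1 and the $|\psi|\le 1$ input required by the integral representation. I would handle this by a sequence of barrier comparisons on the ODE for $\psi$, exploiting the strongly attracting linear part $\psi'+s\psi$ (whose kernel $e^{-s^{2}/2}$ decays super-exponentially). Starting from barriers of the form $\pm C s^{\alpha}$ with $\alpha<1$ and successively halving the exponent, each maximum principle step on $[s_{0},\infty)$ improves the decay of $\psi$ by a definite factor, and after finitely many iterations one reaches $|\psi|\le 1$. From there, the integral representation above gives the final asymptotic $\phi(s)=s-1/s+O(1/s^{2})$.
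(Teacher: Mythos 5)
Your overall strategy (linearize around the approximate solution $s-\tfrac1s$, exploit the strongly damping linear part via barriers plus an integrating-factor/Duhamel representation) is viable, and it is organized differently from the paper: the paper never writes an integral representation, but instead runs a chain of elementary substitutions ($\psi=\phi-s$, then $\lambda=s\psi$, then $\lambda=-1+\mu/s^2$), each of which satisfies a first-order ODE of the form $w'=-(w+c)s+o(s)$ forcing $w\to -c$; in particular the paper gets the rough growth $(1-\eps)s\le\phi\le s+s^{-9}$ directly from the sign structure of the ODE, with no derivative information on $f$ at all. Your step 1 is the first place you should be more careful: you cannot differentiate the relation $f(z)=\sqrt{2z}\,(1+o(1))$, and the $C^{10}$ bounds recorded in the paper control $\theta$-derivatives (the pure $z$-derivative statement as printed gives no decay), so to get $\phi(s)=s(1+o(1))$ this way you would need either an interpolation argument using a genuine bound $|f_{zz}|\lesssim z^{-3/2}$, or the space-time smooth convergence of the rescaled flow to the shrinking cylinder; the paper's ODE route avoids this entirely.

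The more substantive slips are in the bootstrap itself. First, your displayed bound on $E$ is not what the substitution $\phi=s-\tfrac1s+\psi$ produces: the linearization of $I[\phi]=(1+\phi^2)\bigl(1+\gamma-(\tfrac1s+\delta)\phi\bigr)$ at $s-\tfrac1s$ has coefficient $-(s-\tfrac3s+O(s^{-3}))$, not $-(s+\tfrac1s)$, so relative to the operator $\psi'+(s+\tfrac1s)\psi$ the error contains a term of size $|\psi|/s$ (not $|\psi|/s^{2}$); moreover there is a $\psi$-independent residual $I[s-\tfrac1s]-\tfrac{d}{ds}(s-\tfrac1s)=-\tfrac{2}{s^{2}}+O(s^{-4})$ (it comes from the quadratic term $-2\psi^2$ evaluated at $-\tfrac1s$) which your bound omits. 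Second, and this is the step that fails as written, the claim ``once $|\psi|\le 1$ the right-hand side is $O(1/s^{2})$'' is false: with only $|\psi|\le 1$ the quadratic term is merely $O(1)$ and the corrected linear error is $O(1/s)$, so one pass of your integral formula yields only $\psi=O(1/s)$. The fix is routine within your method — either add a barrier step at level $\pm K/s$, or simply iterate the Duhamel estimate once more (with $|E|\le C$ you get $\psi=O(1/s)$, and then with $|E|\le C/s^{2}$ you get $O(1/s^{3})$, even better than needed) — but as stated the bootstrap does not close. A smaller point: the ``successively halving the exponent'' iteration is unnecessary; from $\psi=o(s)$ a single comparison with constant barriers $\pm K$ already works, since at a crossing the damping term $\mp Ks$ dominates both the forcing and the quadratic term.
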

Before proving this lemma, lets see how it implies the main theorem
\begin{proof}[Proof of thm. \ref{main_thm}]
Integrating \eqref{der_et} we obtain that 
\begin{equation}\label{graph_growth}
f(s)=c+s^2/2-\log(s)+O(1/s),
\end{equation}
for some $c>0$.
Together with equation \eqref{g_est}, we see that for every $C<\infty$ there exists $\Lambda<\infty$ such that
\[
M\cap \{z \geq \Lambda\}\cap \{|(x,y)|\leq C\sqrt{z}\}
\]
is a graph of a function 
\[
h:\{z=0\}-B_R\rightarrow \mathbb{R}
\]
satisfying
\begin{equation}
h(s)=c+\frac{s^2}{2}-\log(s)+O\Big(\frac{1}{s}\Big).
\end{equation}
It follows from the proof of theorem B of a paper by Martin, Savas-Halilaj and Smoczyk \cite{MHS}\ that $M\cap \{z \geq \Lambda\}\cap \{|(x,y)|\leq C\sqrt{z}\}$ is rotationally symmetric \footnote{While the assumptions of theorem B in \cite{MHS} are different, they are only used at the beginning of the proof to conclude the asymptotic graph structure with asymptotics as in \eqref{graph_growth}.}. Since the rotation function $u$ satisfies an elliptic PDE on $M$, it follows that $u=0$ everywhere. Thus $M$ is rotationally symmetric. By \cite{CSS}, a rotationally symmetric translators is either the Bowl or the translating catenoid (see also \cite{MHS}). Note that the translating catenoid, however, has a multiplicity two asymptotic cylinder at $-\infty$. Consequently, $M$ is the bowl soliton.  
\end{proof}
\begin{proof}[Proof of Lemma \ref{growth_lemma}]
As was mentioned above, this follows closely the proof of \cite[Lemma 2.1]{CSS}, where the same result is stated under the assumption $\gamma(s)=\delta(s)=0$. 
for every function $\psi:\mathbb{R}\rightarrow \mathbb{R}$, set
\[
I[\psi]=(1+\psi^2)\Big(1+\gamma(s) -\big(\frac{1}{s}+\delta(s)\big)\psi\Big).
\]
\begin{claim}
There exists $s_0$ such that for $s>s_0$ 
\begin{equation}\label{phi_lin_growth_up}
\phi(s) \leq s+\frac{1}{s^{9}}
\end{equation}
\end{claim}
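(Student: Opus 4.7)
The plan is a two-stage comparison argument for the quantity $h(s) := \phi(s) - s$. In stage one, the ODE will force $h$ rapidly downward as long as $h \ge 0$, producing a point where $h$ already lies below $s^{-9}$. In stage two, $\Psi(s) := s + s^{-9}$ will serve as a strict super-solution, and a first-touching argument propagates the bound forward.

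Substituting $\phi = s + h$ into $\phi' = I[\phi]$ and using $1 - \phi/s = -h/s$ gives
\[
h'(s) = -1 - \tfrac{h}{s} - s h - 2 h^{2} - \tfrac{h^{3}}{s} + E(s),
\]
where $E(s) := (1+\phi^{2})\bigl(\gamma(s) - \delta(s)\phi\bigr)$. The cylindrical asymptotic forces $\phi(s)/s \to 1$, so $\phi \le 2s$ past some threshold $S_{0}$; combined with $|\gamma|+|\delta| \le C/s^{10}$ this yields $|E(s)| \le C' s^{-7}$ on that range. Whenever $s \ge S_{0}$ and $h(s) \ge 0$, every explicit term on the right is non-positive and $E$ is dwarfed by the $-1$, so $h'(s) \le -1/2$. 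Since $h(S_{0}) \le S_{0}$, this forced descent produces some $s_{*} \le 3 S_{0}$ with $h(s_{*}) \le 0 \le s_{*}^{-9}$, providing the initial datum for stage two.

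For stage two, set $w(s) := h(s) - s^{-9}$ and suppose toward contradiction that $w$ becomes positive somewhere past $s_{*}$; let $s^{\#}$ denote the first such crossing, so $w(s^{\#}) = 0$ and $w'(s^{\#}) \ge 0$. At $s^{\#}$ one has $h(s^{\#}) = (s^{\#})^{-9} > 0$, which makes every explicit term in the expansion non-positive, and the largest of them is only $-(s^{\#})^{-8}$, so $h'(s^{\#}) \le -1 + O((s^{\#})^{-7})$. Therefore $w'(s^{\#}) = h'(s^{\#}) + 9 (s^{\#})^{-10} \le -1/2$ once $s^{\#}$ is large, contradicting $w'(s^{\#}) \ge 0$. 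This yields $\phi(s) \le s + s^{-9}$ for all $s \ge s_{*}$.

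The step I expect to require the most care is the bookkeeping of the error $E$: one must verify that $|E| \le C' s^{-7}$ remains subordinate both when $h$ is comparable to $s$ (during the initial descent) and when $h$ is as small as $s^{-9}$ (at the barrier touching), which in turn rests on the a priori bound $\phi \le 2s$ furnished by $\phi(s)/s \to 1$. Once this is arranged, both stages are short, and the exponent $9$ plays no distinguished role --- any sufficiently large power would succeed identically, and indeed the same mechanism is precisely what one will iterate to obtain the sharper asymptotics $\phi(s) = s - 1/s + O(s^{-2})$ claimed in Lemma \ref{growth_lemma}.
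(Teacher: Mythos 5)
Your two-stage barrier scheme is sound in outline, and stage two is essentially correct: at a first contact with the barrier $s+s^{-9}$ one automatically has $\phi=s+s^{-9}\le 2s$, so the bound $|E|\le C's^{-7}$ is legitimate there and the contradiction $w'(s^{\#})\le -1/2$ goes through. The genuine gap is the a priori input you feed into stage one: you assert that ``the cylindrical asymptotic forces $\phi(s)/s\to 1$, so $\phi\le 2s$,'' and you use this both for $h(S_{0})\le S_{0}$ and for $|E|\le C's^{-7}$ during the descent. At this stage of the paper nothing of the sort is known: what is available is $f(z)=\sqrt{2z}+o(\sqrt{z})$, derivative bounds coming from the smooth convergence to the cylinder, and the monotonicity of $f$. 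The asymptotic $z(s)\sim s^{2}/2$ does not pass to the derivative, and smooth closeness to the cylinder yields only upper bounds on $|f_{z}|$, i.e.\ lower bounds on $\phi=1/f_{z}$ (of order $s$ at best); it gives no positive lower bound on $f_{z}$, hence no upper bound on $\phi$ whatsoever. Indeed $\phi(s)/s\to 1$ is precisely (the upper-bound half of) what Lemma \ref{growth_lemma} is in the process of establishing, so invoking it here is circular.

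The gap is repairable without any a priori bound: when $h=\phi-s\ge 0$ one has $|E|\le C(1+s+h)^{3}s^{-10}\le \tfrac12+\tfrac{h^{3}}{s}$ for all $s$ past a fixed threshold, uniformly in $h$, because the cubic damping term $-h^{3}/s$ in your expansion dominates $E$ once $s^{9}\ge C$; and stage one only needs $h(S_{0})<\infty$ (which is automatic), not $h(S_{0})\le S_{0}$, since descent at rate at least $1/2$ still brings $h$ to $0$ in finite $s$. With those modifications your argument closes. For comparison, the paper avoids barriers altogether: it first observes that $\phi'>0$ must occur somewhere (otherwise $\phi$ is bounded and the factor $1+\gamma-(\tfrac1s+\delta)\phi$ eventually becomes positive, a contradiction); at any $s$ with $\phi'(s)>0$ the positivity of that factor immediately gives $\phi(s)\le s+s^{-9}$; and at points with $\phi'(s)<0$ it traces back to the last critical point and uses the monotonicity of $s\mapsto s+s^{-9}$.
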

\begin{proof}
If $\phi'(s) \leq 0$ for all $s$ large, then $\phi$ is bounded from above, and so the quantity $\Big(1+\gamma(s) -\big(\frac{1}{s}+\delta(s)\big)\phi\Big)$ becomes positive for $s$ large enough, contradicting $\phi' \leq 0$. Thus there exists some $s_0$ such that $\phi'(s_0)>0$. Taking $s>s_0$, there are two cases: (i) if $\phi'(s)>0$ then 
\[
\big(\frac{1}{s}+\delta(s)\big)\phi \leq 1+\gamma(s)
\] 
which gives \eqref{phi_lin_growth_up}. (ii) If $\phi'(s)<0$ then letting $s'<s$ be such that $\phi'<0$ on $(s',s]$ and $\phi'(s')=0$ we get
\[
\phi(s)<\phi(s') \leq s'+\frac{1}{s'^{9}} \leq s+\frac{1}{s^9}.
\] 
\end{proof}
\begin{claim}
For every $\eps>0$ there exists $s_0$ such that for $s>s_0$
\begin{equation}
\phi(s) \geq (1-\eps)s
\end{equation}
\end{claim}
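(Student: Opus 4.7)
The plan is a barrier argument with $\psi(s) = (1-\eps)s$ as lower barrier, following the strategy of \cite[Lemma 2.1]{CSS}. I need two things: (i) the region $\{\phi \geq \psi\}$ is forward invariant for $s$ large, and (ii) $\phi$ must eventually enter this region. Throughout I will use that $\phi > 0$ for $s$ large (since $\phi = z_s$ where $z$ is the inverse of the increasing $f$ produced by the previous lemma) together with the upper bound $\phi(s) \leq s + s^{-9}$ established in the previous claim.

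For (i), at any $s^{*}$ where $\phi(s^{*}) = (1-\eps)s^{*}$, the ODE gives
\[
\phi'(s^{*}) = \bigl(1 + (1-\eps)^{2}(s^{*})^{2}\bigr)\bigl(\eps + \gamma(s^{*}) - (1-\eps)\, s^{*}\,\delta(s^{*})\bigr).
\]
Since $|\gamma(s)| + s|\delta(s)| = O(s^{-9})$, the second factor is at least $\eps/2$ once $s^{*}$ is large, so $\phi'(s^{*}) \geq \tfrac{\eps}{4}(s^{*})^{2}$, which dwarfs $\psi'(s^{*}) = 1-\eps$. The standard first-crossing argument (at the infimum of $\{s>s^{*}:\phi(s)<\psi(s)\}$ one would need $(\phi-\psi)'\leq 0$, contradicting the strict inequality just derived) then shows that $\phi - \psi$ cannot change sign from nonnegative to negative for $s$ large.

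For (ii), suppose to the contrary that $\phi(s) < (1-\eps)s$ for all $s \geq s_{0}$, with $s_{0}$ large enough that $|\gamma(s)| + (1-\eps)s|\delta(s)| \leq \eps/10$. Since $\phi > 0$ and $\phi/s < 1-\eps$ in this regime, the ODE yields $\phi'(s) \geq \tfrac{4\eps}{5}(1+\phi(s)^{2})$. Once $\phi \geq 1$ (which happens in bounded $s$-time since $\phi' \geq \tfrac{4\eps}{5}>0$ inside the region), this becomes the Riccati-type inequality $(1/\phi)' \leq -\tfrac{4\eps}{5}$, forcing $\phi$ to blow up in finite $s$ and contradicting $\phi(s) \leq s + s^{-9}$. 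Hence some $s^{*} \geq s_{0}$ satisfies $\phi(s^{*}) \geq (1-\eps)s^{*}$, and by (i) this persists for all $s \geq s^{*}$.

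I do not expect any serious obstacle beyond bookkeeping; the single thing to monitor is that the perturbations $\gamma, \delta$ remain negligible compared to the $\eps$-scale in (i) and compared to $1+\phi^{2}$ in (ii), both of which are automatic thanks to the $O(s^{-10})$ decay of the error terms.
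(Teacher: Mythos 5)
Your proof is correct and follows essentially the same route as the paper: you verify that $v=(1-\eps)s$ is a subsolution of the $\phi$-equation (so that once $\phi$ lies above it, it stays above), and you rule out $\phi<(1-\eps)s$ holding for all large $s$ via the Riccati-type inequality $\phi'\geq c\,\eps\,(1+\phi^2)$, which forces finite-$s$ blow-up and contradicts the existing upper bound. The only nitpick is that your intermediate constant $\phi'(s^{*})\geq \tfrac{\eps}{4}(s^{*})^{2}$ is not literally right when $\eps$ is close to $1$, but since all you need there is $\phi'(s^{*})>1-\eps$ (or one may assume $\eps$ small without loss of generality), this is immaterial.
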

\begin{proof}
Note that $v=(1-\eps)s$ satisfies $v'\leq I[v]$ for $s$ large enough. Thus the claim would follow if we can show that for every $s_0$ there exists $s_1>s_0$ such that $\phi(s_1) \geq (1-\eps)s_1$. But if this is false, then
\[
\phi'(s)\geq(1+\phi^2)\Big(1+\gamma(s) -\big(\frac{1}{s}+\delta(s)\big)(1-\eps)s\Big)\geq \frac{\eps(1+\phi^2)}{2},
\]
for $s$ large enough, which will result in blow up in finite $s$. this is a contradiction. 
\end{proof}
We've concluded that $\phi(s)$ can be written as $\phi(s)=s+\psi(s)$, where $\psi=o(s)$, $\psi(s)\leq \frac{1}{s^9}$ and satisfies the equation:
\begin{equation}\label{psi_eq}
\psi' =(1+(\psi+s)^2)(\gamma(s)-\big(\frac{1}{s}+\delta(s)\big)\psi)-1
\end{equation}
where $|\gamma(s)|+|\delta(s)|\leq \frac{C}{s^9}$ (the function $\gamma$ has changed in this line).
\begin{claim}
$\psi(s)\rightarrow 0$ as $s\rightarrow \infty$.
\end{claim}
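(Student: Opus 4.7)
The plan is to prove the claim by a barrier argument on the ODE \eqref{psi_eq}. The inequality $\psi(s)\leq s^{-9}$ already in hand takes care of $\limsup_{s\to\infty}\psi(s)\leq 0$, so the content of the statement is the lower bound: for every fixed $\epsilon>0$ one must show $\psi(s)\geq -\epsilon$ for all sufficiently large $s$.

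The key step is to verify that the constant value $-\epsilon$ is a subsolution of \eqref{psi_eq} for $s$ large. Since the preceding claim gives $\phi(s)\geq (1-\epsilon')s$, i.e.\ $\psi(s)\geq -\epsilon's$, for any $\epsilon'>0$ and $s$ large, I may restrict attention to the regime $\psi\geq -s/2$, so that $\phi(s)\in [s/2,\,s]$ whenever $\psi(s)\leq 0$. On the region $\{-s/2\leq \psi \leq -\epsilon\}$, the dominant contribution to the right-hand side of \eqref{psi_eq} is
\[
-(1+\phi^{2})\frac{\psi}{s}\;\geq\; \frac{s^{2}}{4}\cdot\frac{\epsilon}{s}\;=\;\frac{\epsilon s}{4},
\]
while the remaining piece $(1+\phi^{2})(\gamma-\delta s -\delta\psi)-1$ is $O(1)$, using $|\gamma|+|\delta|\leq Cs^{-9}$, $|\psi|\leq s/2$, and $1+\phi^{2}\leq 2s^{2}$. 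Thus for $s$ sufficiently large one has $\psi'(s)\geq \epsilon s/5$ throughout $\{-s/2\leq \psi \leq -\epsilon\}$.

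With this sub-barrier estimate in hand, the conclusion follows in two standard steps. First, $\psi$ cannot stay in $\{\psi\leq -\epsilon\}$ on any unbounded interval $[s_{0},\infty)$, since integrating $\psi'\geq \epsilon s/5$ there would force $\psi(s)\to +\infty$, contradicting $\psi(s)\leq s^{-9}$. Hence there exists $s^{*}$ with $\psi(s^{*})>-\epsilon$. Second, after $s^{*}$ the solution cannot return to the level $-\epsilon$: at any hypothetical return time $s_{1}>s^{*}$ one would have $\psi(s_{1})=-\epsilon$ together with $\psi'(s_{1})\leq 0$, contradicting $\psi'(s_{1})\geq \epsilon s_{1}/5>0$. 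Therefore $\psi(s)>-\epsilon$ for all $s\geq s^{*}$, and since $\epsilon>0$ is arbitrary, $\psi(s)\to 0$ as $s\to\infty$.

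The only genuine technicality, and the step that deserves careful bookkeeping, is ensuring that on $\{-s/2\leq\psi\leq -\epsilon\}$ the error contributions $(1+\phi^{2})(\gamma-\delta s-\delta\psi)$ together with the $-1$ are truly dominated by $\epsilon s/4$, without implicitly assuming $|\psi|\leq\epsilon$ (which is the very conclusion being sought). The fast decay $|\gamma|+|\delta|\leq Cs^{-9}$ against the polynomial size $1+\phi^{2}\leq 2s^{2}$ in that region makes this a routine estimation, and I do not anticipate any obstacle beyond writing it out carefully.
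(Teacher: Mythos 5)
Your argument is correct and is essentially the paper's own proof, just written out in more detail: the paper also uses sub-linearity of $\psi$ to get $\psi'\geq \eps s/2$ whenever $\psi<-\eps$, and compresses into one line the ``cannot stay below $-\eps$, cannot return below $-\eps$'' step that you spell out. The only nit is the spurious $-\delta s$ term in your error piece (the equation \eqref{psi_eq} has $-(\frac1s+\delta)\psi$, not $-(\frac1s+\delta)\phi$), which is harmless since even that term is $O(s^{-6})$ against the main term $\eps s/4$.
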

\begin{proof}
Let $\eps>0$. If $\psi<-\eps$ then, since $\psi$ is sub-linear, we have, by \eqref{psi_eq}, that 
\[
\psi' \geq \frac{\eps s}{2},\;\;s\;\textrm{large enough}
\]
Thus, there exists some $s_0$ such that for $s>s_0$, $\psi(s)>-\eps$.
\end{proof}
Now, set $\lambda(s)=s\psi(s)$, and observe that $\lambda$ is sub-linear and satisfies,
\begin{equation}\label{lambda_eq_ex}
\lambda' =s(1+(\psi+s)^2)(\gamma(s)-\big(\frac{1}{s}+\delta(s)\big)\frac{\lambda}{s})-s+\frac{\lambda}{s}
\end{equation}
which, in light of the known asymptotics is of the form 
\begin{equation}
\lambda'=-(\lambda+1)s+o(s)
\end{equation}
from which we get
\begin{equation}\label{lambda_as}
\lim_{s\rightarrow \infty} \lambda(s)=-1.
\end{equation}
Now, writing $\lambda(s)=-1+\frac{\mu(s)}{s^2}$ we see, similarly to the above,  that $\mu$ is sub-quadratic, and satisfies the estimate
\[
\mu'(s)=-(\mu+2)s +o(s)
\]
which implies that 
\begin{equation}\label{mu_as}
\lim_{s\rightarrow \infty} \mu(s)=-2.
\end{equation}
Thus $\lambda(s)=-1+O(s^{-2})$, and so $\psi(s)=-\frac{1}{s}+O(s^{-3})$, thus giving
\begin{equation}
\phi(s)=s-\frac{1}{s}+O(s^{-3}).
\end{equation}
 
\end{proof}
\section{Proof of Thoerem \ref{main_thm_b}}
\begin{proof}[Proof of Theorem \ref{main_thm_b}]. Let $M_t$ be such a MCF, and let $\{(x_k,t_k)\}_{k=1}^{\infty}\rightarrow (x_0,t_0)$ with $t_k \leq t_0$, and $\lambda_k\rightarrow \infty$ be such that the flow
\begin{equation}\label{lim_flow}
\lambda_k(M_{\lambda_k^{-2}(s+t_k)}-x_k)\rightarrow N_s,
\end{equation}
where the limit is in the $C^{\infty}_{\mathrm{loc}}$ sense, and $N_s$ is a translating flow. Assume w.l.g that $N_s$ translates in the direction $\tau$ with velocity $1$, and note that 
\[
1<\mathcal{E}(N) \leq \mathcal{E}(M_0)<\infty,
\]
where $\mathcal{E}$ is the entropy functional (see \cite{CM}). Now, by Huisken's monotonicity formula \cite{Huisken_monotonicity} and \cite{Ilm}, there exists a sequence $\{s_j\}_{j=1}^{\infty}$ with $s_j\rightarrow -\infty$ such that 
\begin{equation}\label{ass_converge}
\lim_{j\rightarrow \infty} N_{s_j}/\sqrt{-s_j} \rightarrow \mathcal{S}\;\;\;\;\;\textrm{as varifolds},
\end{equation}
where $\mathcal{S}$ is a  smooth, non-compact self shrinker (potentially with multiplicity) of the MCF with $\mathcal{E}(\mathcal{S})=\mathcal{E}(N)$.
\begin{claim}
$\mathcal{S}$ is a multiplicity one cylinder.
\end{claim}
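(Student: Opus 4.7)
My plan is to exhibit $\mathcal{S}$ as a tangent flow of $M_t$ at the specific point $(x_0, t_0)$ via a careful diagonal argument, and then invoke the cylindrical-singularity hypothesis directly. The main technical obstacle is the diagonal step: one must choose indices so that the rescaling factor goes to infinity while the rescaled spacetime shifts of the drifting centers vanish.

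First, I would unwind the iterated limit $\mathcal{S} = \lim_{j}\lim_{k} \frac{\lambda_k}{\sqrt{-s_j}}\bigl(M_{\lambda_k^{-2}s_j + t_k} - x_k\bigr)$ by a diagonal argument. Setting $\tilde{\mu}_j := \lambda_{k_j}/\sqrt{-s_j}$, one picks a subsequence $k_j \to \infty$ rapidly enough that: (i) $\tilde{\mu}_j \to \infty$, (ii) $\lambda_{k_j}^{-2}|s_j| \to 0$, (iii) $\tilde{\mu}_j\,|x_{k_j} - x_0| \to 0$, and (iv) $\tilde{\mu}_j^{\,2}\,|t_{k_j} - t_0| \to 0$. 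These conditions are simultaneously achievable because $\lambda_k \to \infty$, $(x_k, t_k) \to (x_0, t_0)$, and $\sqrt{-s_j} \to \infty$. Then the rescalings $\tilde{\mu}_j\bigl(M_{\tilde{\mu}_j^{-2}\sigma + t_{k_j}} - x_{k_j}\bigr)$ converge, as Brakke flows, to the self-similar shrinking flow $\sqrt{-\sigma}\,\mathcal{S}$ for $\sigma < 0$.

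Next, I would upgrade this to a genuine tangent flow at the fixed point $(x_0, t_0)$. Conditions (iii) and (iv) render the spacetime shift from $(x_{k_j}, t_{k_j})$ to $(x_0, t_0)$ negligible in the rescaled coordinates, so the fixed-center rescalings $\tilde{\mu}_j\bigl(M_{\tilde{\mu}_j^{-2}\sigma + t_0} - x_0\bigr)$ share the same limit $\sqrt{-\sigma}\,\mathcal{S}$. Hence $\mathcal{S}$ (together with its associated self-similar flow) is a tangent flow of $M_t$ at $(x_0, t_0)$.

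Finally, the hypothesis of Theorem~\ref{main_thm_b} closes the argument. A tangent flow at a regular point of $M_t$ is a multiplicity one plane of entropy $1$; but $\mathcal{E}(\mathcal{S}) = \mathcal{E}(N) > 1$ since a non-static translating soliton cannot be a static plane (by the equality case of Huisken's monotonicity). Therefore $(x_0, t_0)$ is a singular point of $M_t$, and by the cylindrical-singularity hypothesis every tangent flow there is a multiplicity one cylinder. So $\mathcal{S}$ is a multiplicity one cylinder, as claimed.
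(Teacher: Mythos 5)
There is a genuine gap at the diagonalization step. Your conditions (i)--(iv) are \emph{not} ``simultaneously achievable'' merely because $\lambda_k\to\infty$, $(x_k,t_k)\to(x_0,t_0)$ and $\sqrt{-s_j}\to\infty$: you need to fit $\sqrt{-s_j}$ strictly between $\lambda_k|x_k-x_0|$ (resp. $\lambda_k|t_k-t_0|^{1/2}$) and $\lambda_k$, and there is no control whatsoever on these rates. For a limit flow that is not already a tangent flow one typically has $\lambda_k|x_k-x_0|\to\infty$ and $\lambda_k^2|t_k-t_0|\to\infty$, and the gap between $\lambda_k|x_k-x_0|$ and $\lambda_k$ need not capture any term of the fixed sequence $\sqrt{-s_j}$ (for instance, if $\sqrt{-s_j}=2^{2^j}$, $\lambda_k=k\,2^{2^k}$ and $|x_k-x_0|=1/k$, then for every pair $(j,k)$ either $\lambda_k/\sqrt{-s_j}\le 1$ or $\lambda_k|x_k-x_0|/\sqrt{-s_j}\ge 1$, so (i) and (iii) can never hold along a subsequence). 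More fundamentally, the assertion you are trying to manufacture --- that the asymptotic shrinker of a limit flow is a tangent flow of the original flow at the base point $(x_0,t_0)$ --- is exactly the difference between limit flows and tangent flows, and it is not a known (or true, in this generality) fact; the whole reason the claim requires an argument is that one cannot reduce to the hypothesis on tangent flows in this direct way.

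The paper circumvents this by an entropy comparison rather than a recentering: from \eqref{lim_flow} and \eqref{ass_converge} one only extracts spacetime points $(x'_k,t'_k)\to(x_0,t_0)$ and scales $r'_k\to 0$ at which the Gaussian density ratio of the \emph{original} flow is at least $\mathcal{E}(\mathcal{S})-\eps$; since the singularity at $(x_0,t_0)$ is cylindrical, the density ratios at all nearby spacetime points at a fixed small scale are at most $\mathcal{E}(S^1)+\eps$, so Huisken's monotonicity gives $\mathcal{E}(\mathcal{S})\le\mathcal{E}(S^1)$. The identification of $\mathcal{S}$ as a multiplicity one cylinder then comes not from the hypothesis on tangent flows but from the Bernstein--Wang classification of low-entropy self-shrinkers \cite[Corollary 1.2]{BW}, using that $\mathcal{S}$ is non-compact (to rule out the sphere) and $\mathcal{E}(\mathcal{S})=\mathcal{E}(N)>1$ (to rule out the plane). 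Your final step (singularity of $(x_0,t_0)$ and the cylindrical hypothesis) is fine as far as it goes, but it is only reached after the unjustified tangent-flow reduction, so the proof as proposed does not go through.
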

\begin{proof}
\eqref{lim_flow} and \eqref{ass_converge} together, imply that there exist a sequence of points in the original space time $(x'_k,t'_k)$ with $t'_k <t_0$, such that $(x'_k,t'_k)\rightarrow (x_0,t_0)$ and $r'_k\rightarrow 0$, such that for every $\eps>0$, for $k$ large enough,   
\begin{equation}
\frac{1}{4\pi {r'_k}^2}\int_{M_{t'_k-{r'_k}^2}}\exp(-|x-x'_k|^2/4{r'_k}^2) \geq \mathcal{E}(\mathcal{S})-\eps.
\end{equation}
On the other hand, since $(x_0,t_0)$ is a cylindrical singularity, there exist $r>0$ such that for every point in $(p,t)\in P((x_0,t_0),r)$
\begin{equation}
\frac{1}{4\pi r^2}\int_{M_{t-r^2}}\exp(-|x-p|^2/4r^2) \leq \mathcal{E}(S^1)+\eps.
\end{equation}
By Huisken monotonicity formula and by the arbitrariness of $\eps$, this implies that 
\[
\mathcal{E}(\mathcal{S}) \leq \mathcal{E}(S^1).
\]
By \cite[Corollary 1.2]{BW}, $\mathcal{S}$ is a multiplicity one cylinder ($\mathcal{S}$ is not the 2 sphere, as it is non-compact).
\end{proof}
Since $\mathcal{S}$ is a multiplicity one cylinder, by Brakke regularity theorem \cite{Bra,White_reg}, the convergence in \eqref{ass_converge} is in fact smooth. The result now follows from Theorem \ref{main_thm}.
\end{proof}
\bibliography{Trans}
\bibliographystyle{alpha}
\end{document}